\theoremstyle{plain}
\newtheorem{thm}{Theorem}
\newtheorem{cor}[thm]{Corollary}
\newtheorem{lem}[thm]{Lemma}
\newtheorem{prop}[thm]{Proposition}
\theoremstyle{definition}
\newtheorem{defn}[thm]{Definition}
\newcommand{\Z}{\mathbb{Z}^+}
\newcommand{\Ex}{\mathbb{E}(\chi)}
\newcommand{\Em}{\mathbb{E}(\varphi)}
\newcommand{\dsum}{\sum_{n\geq 0} \sum_{k\geq 0}}
\newcommand{\Exs}{\mathbb{E}(\chi ^2)}
\newcommand{\Vx}{\mathbb{V}(\chi)}
\newcommand{\Vm}{\mathbb{V}(\varphi)}
\newcommand{\s}{\hspace{.5pc}}
\def\ds{\displaystyle}
\begin{document}

\title{Counting Fixed-Length Permutation Patterns}
\author{Cheyne Homberger \\
\small Department of Mathematics \\[-0.8ex]
\small University of Florida \\[-0.8ex]
\small Gainesville, FL \\
\small\tt cheyne42@ufl.edu }

\maketitle

\begin{abstract} 
  We consider the problem of packing fixed-length patterns into a permutation,
  and develop a connection between the number of large patterns and the number
  of bonds in a permutation. Improving upon a result of Kaplansky and Wolfowitz,
  we obtain exact values for the expectation and variance for the number of
  large patterns in a random permutation. Finally, we are able to generalize the
  idea of bonds to obtain results on fixed-length patterns of any size, and
  present a construction that maximizes the number of distinct large patterns.  
\end{abstract}

\section{Background}
  
  Two sequences of distinct integers $a_1 a_2 \ldots a_n$ and $b_1 b_2 \ldots
  b_n$ are \emph{order isomorphic} if, for all $1 \leq i,j \leq $n, we have that
  $a_i < a_j$ if and only if $b_i < b_j$.  Let $q = q_1 q_2 \ldots q_k$ be a
  permutation in the symmetric group $S_k$ written in one-line notation. We say
  that a permutation $p = p_1 p_2 \ldots p_n \in S_n$ \emph{contains $q$ as a
  pattern} if there is a subsequence $p_{i_1} p_{i_2} \ldots p_{i_k}$ which is
  in the same relative order as the entries of $q$. If $p$ does not contain $q$
  as a pattern, we say that $p$ \emph{avoids} $q$. 

  For example, the permutation $p = 4732615$ contains the pattern $q = 213$ because
  the 1st, 4th, and 7th entries of $p$ are order isomorphic to the permutation
  $213$. This permutation avoids the pattern $123$, however, because $p$
  contains no increasing subsequence of length $3$. For another example, a
  permutation $p$ avoids the pattern $q = 21$ if and only if it is strictly
  increasing, since otherwise $p$ would contain an inversion, and an inversion
  is precisely a $21$ pattern.
  
  As a relation, pattern containment is transitive, reflexive, and
  anti-symmetric. Therefore the set of all permutations equipped with this ordering
  forms a graded partially ordered set (poset), which is referred to
  in the literature as the \emph{pattern poset}. Given a permutation $p$, the
  set of all patterns contained in $p$ forms a downset (also referred to as an
  ideal) of this poset.

  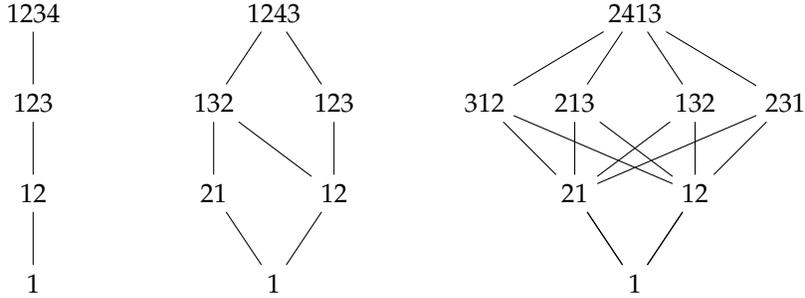
\begin{figure}[ht]
  \centering
  \begin{tikzpicture}
  [scale = .4]
    \node (a1) at (-8,9) {$1234$};
    \node (a2) at (-8,6) {$123$};
    \node (a3) at (-8,3) {$12$};
    \node (a4) at (-8,0) {$1$};
    \draw (a1) -- (a2) -- (a3) -- (a4);

    \node (b1) at (0,9) {$1243$};
    \node (b21) at (-2,6) {$132$};
    \node (b22) at (2,6) {$123$};
    \node (b31) at (-2,3) {$21$};
    \node (b32) at (2,3) {$12$};
    \node (b4) at (0,0) {$1$};
    \draw (b1) -- (b21) -- (b31) -- (b4);
    \draw (b1) -- (b22) -- (b32) -- (b4);
    \draw (b21) -- (b32);

    \node (c1) at (12,9) {$2413$};
    \node (c21) at (7,6) {$312$};
    \node (c22) at (10,6) {$213$};
    \node (c23) at (14,6) {$132$};
    \node (c24) at (17,6) {$231$};
    \node (c31) at (10,3) {$21$};
    \node (c32) at (14,3) {$12$};
    \node (c4) at (12,0) {$1$};
    \draw (c1) -- (c21) -- (c31) -- (c4);
    \draw (c1) -- (c22) -- (c31) -- (c4);
    \draw (c1) -- (c23) -- (c32) -- (c4);
    \draw (c1) -- (c24) -- (c32) -- (c4);
    \draw (c21) -- (c32) --(c22);
    \draw (c23) -- (c31) --(c24);
  \end{tikzpicture}
  \caption{Downsets of 1234, 1243, and 2413}
  \end{figure}

  
  The area of permutation patterns has received considerable attention in recent
  years. The majority of work has been focused on enumerating infinite downsets in
  the pattern poset (known as permutation classes), particularly those which
  arise as sets of permutations avoiding specified patterns. 
  An early result in the area, due to Knuth \cite{knuth3}, is that the 231 avoiding
  permutations are counted by the Catalan numbers $\frac{1}{n}\binom{2n}{n}$,
  and these are exactly the stack sortable permutations.  A more comprehensive
  introduction to the subject can be found in \cite{bonabook}. 
  
  Interesting questions are raised, however, even if we restrict ourselves to
  finite downsets of the pattern poset.  We focus our attention here on
  examining the downset of a single permutation.  
  In 2003, Herb Wilf raised the question of finding the maximum number of distinct
  patterns which can be contained in a permutation of length $n$, and
  classifying those permutations which achieve this maximum.
  Translated to the language of posets, Wilf's question asks to find which
  permutations maximize the size of their downset in the pattern poset. 
  In \cite{albert}, the authors showed that the maximum number of patterns that
  can be contained in a permutation of length $n$ is asymptotic to $2^n$.
  However, the exact value of the maximum is unknown. 

\section{Preliminaries}
  
  This paper can be divided into
  two parts: in the first, we examine the number of $(n-1)$-patterns in a random
  $n$-permutation, and obtain exact values for both the expectation and variance
  of this statistic by extending a 1945 result of Kaplansky and Wolfowitz. In
  the second part, we examine the number of patterns of a fixed size in a given
  permutation, and provide a partial answer Herb Wilf's question.

  In counting the total number of patterns contained in a permutation, it is
  most useful to use a top-down approach, enumerating all of the largest
  patterns and working down the downset level by level. We introduce an
  alternate (but equivalent) definition of permutation patterns which better
  suits this approach.

  \begin{defn} 
  Let $p = p_1p_2 \ldots p_n \in S_n$, $n\geq 2$. Let $0 \leq k \leq
  n-1$. We say that a permutation $q \in S_{n-k}$ is an \emph{$(n-k)$-pattern} of
  $p$ if $q$ can be obtained by deleting $k$ entries of $p$ and then relabelling
  the remaining entries $1$ through $n-k$ with respect to order. Let
  \emph{$D_k(p)$} denote the set of $(n-k)$-patterns of a permutation $p$, and
  $\mathcal{D}(p) = \bigcup_k D_k(p)$ denote the set of \emph{all} patterns
  contained in $p$.
  \end{defn}

   
 Where Wilf's problem asks
  which permutations maximize the total size of the downset, our focus will be
  on finding those permutations which maximize the width of a \emph{specified}
  level of this downset. 
  
  First, we investigate the number of coatoms. That is, we will fix an $n\geq 2$
  and focus our attention on patterns of size $(n-1)$ contained in a given
  $n$-permutation $p$. We limit our attention to $(n-1)$-patterns not only
  because they are easier to work with, but because these results can in some
  cases be extended to results for $(n-k)$-patterns, simply by working our way
  down the downset level by level. To start, we formalize our notion of
  $(n-1)$-patterns with a function.  To simplify the notation, we use $[n]$ to
  denote the set $\{1,2,3, \ldots n\}$. 

  \begin{defn} 
  Let $del: S_n\times [n] \rightarrow S_{n-1}$ be the function where
  $del(p,i)$ is defined by deleting the $i$th entry of p, and relabelling the
  remaining entries $1$ through $n-1$ with respect to order.  
  \end{defn} 

  Since any $(n-1)$-pattern $q$ of a permutation $p$ uses all but one entry of
  $p$, we see that $q = del(p,k)$ for some $k \in [n]$. Also, it is clear that if $q
  = del(p,k)$ for some $k \in [n]$, then $q$ is contained in $p$ as a pattern. This
  implies that $D_1(p) = \{del(p,k) : k\in [n] \}$.

  Inversely, we can build up an $(n-1)$-permutation into an $n$-permutation by
  inserting an extra entry.
  We define another function to formalize this idea.

  \begin{defn} 
  Let $ins : S_{n-1}\times [n] \times [n] \rightarrow S_n$ be the function where
  $ins(q,j,k)$ is defined by inserting the entry $k-1/2$ immediately to the left
  of the $j$th entry of $q$, and then relabelling $1$ through $n$ with respect
  to order. Let $I_1(q) = \bigcup_i \bigcup_j ins(q,i,j)$ denote the set of all
  $n$-permutations which can be obtained by inserting one entry to $q$. 
  \end{defn}

  The function $ins$ can best be understood graphically:

    \begin{figure}[ht]
      \centering
      \begin{tikzpicture}
      [scale = .8, line width=1pt]
      \draw (0,5.5) -- (0,0) -- (5.5,0);
      \foreach \x in {1,2,3,4,5}
        \draw (\x, 0) -- (\x, -.2);
      \foreach \y in {1,2,3,4,5}
        \draw (0, \y) -- (-.2, \y);
      \foreach \x/\y in {1/1, 2/5, 3/3, 4/2, 5/4}
        \draw[fill = black] (\x,\y) circle (1.7mm);
      \end{tikzpicture} \hspace{4pc}
      \begin{tikzpicture}
      [scale = .8, line width = 1pt]
      \draw (0,5.5) -- (0,0) -- (5.5,0);
      \foreach \x in {1,2,3,4,5}
        \draw (\x, 0) -- (\x, -.2);
      \foreach \y in {1,2,3,4,5}
        \draw (0, \y) -- (-.2, \y);
      \foreach \x/\y in {1/1, 2/5, 3/3, 4/2, 5/4, 1.5/3.5}
        \draw[fill = black] (\x,\y) circle (1.7mm);
      \draw[line width = .5pt] (1.5,0) -- (1.5, 5.5);
      \draw[line width = .5pt] (0, 3.5) -- (5.5, 3.5);
      \end{tikzpicture}
   \caption{$ins(15324,2,4) = 146325$.}
   \end{figure}
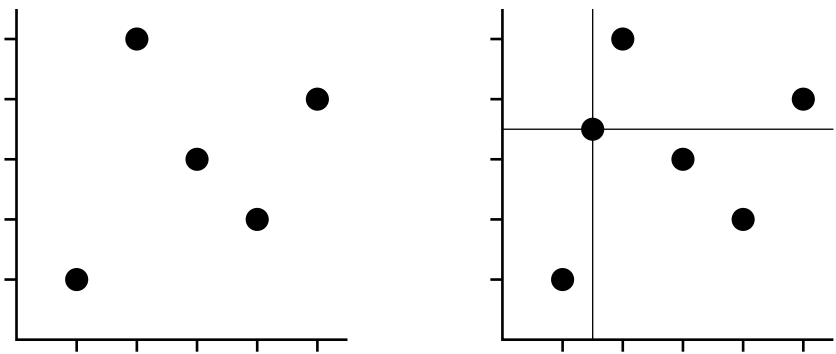

  Now, from the definitions of these two functions, we see that they satisfy the
  following inverse relationship: 
  $$del(ins(q,j,k),j) = q \text{ and } ins(del(p,i),i,p_i) = p.$$

  This relationship, along with the fact that $D_1(p)$ is the set of all
  $(n-1)$-patterns contained in $p$, implies that $I_1(q)$ is exactly the set of all
  $n$-permutations which contain $q$ as a pattern.

\section{The size of $D_1(p)$}

  It follows directly from the definition that given any permutation $ p \in S_n$,
  $|D_1(p)| \leq n$, and that $|D_1(p)| = n$ if and only if $del(p,i) = del(p,j)$
  implies that $i = j$. Before investigating further, we introduce another
  definition.

  \begin{defn} 
  Let $p = p_1p_2 \ldots p_n$ be a permutation, and let $i \in [n-1]$. Say that
  the pair $(p_i, p_{i+1})$ is a \emph{bond}, of entries of $p$ if $p_i - p_{i+1}
  = \pm1$. We say that the sequence $(p_i, p_{i+1}, \ldots p_{i+k-1})$ is a
  \emph{run} of length $k$ if, for $1 \leq j \leq k-2$, the pair
  $(p_{i+j},p_{i+j+1})$ is a bond. Denote by $C(p)$ the number of bonds contained
  in a permutation $p$. 
  \end{defn}

  Note that runs are necessarily either increasing or decreasing, and that a run
  of length $k$ contains $k-1$ bonds. We can now establish a fundamental
  relationship between bonds and $(n-1)$-patterns. 

  \begin{lem} \label{biglem} 
  Let $p = p_1p_2 \ldots p_n$, and $1 \leq j < k \leq
  n$. Then $del(p,j) = del(p,k)$ if and only if $p_j$ and $p_k$ are part of the same
  run.  
  \end{lem}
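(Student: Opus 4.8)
The plan is to recast the equality $del(p,j)=del(p,k)$ as an order-isomorphism between two subsequences of $p$ and then read off exactly which local configurations permit that isomorphism. Since applying $del$ simply relabels a subsequence according to relative order, $del(p,j)=del(p,k)$ holds precisely when the subsequence of $p$ obtained by omitting the entry in position $j$ is order-isomorphic to the one obtained by omitting position $k$. I would first align these two length-$(n-1)$ subsequences entry by entry. They carry identical raw values at every position outside the block of positions $j, j+1, \ldots, k$; on the middle block the omit-$j$ subsequence reads $(p_{j+1}, \ldots, p_k)$ while the omit-$k$ subsequence reads $(p_j, \ldots, p_{k-1})$. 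Thus the isomorphism is equivalent to two conditions: (1) these two middle blocks are order-isomorphic to one another, and (2) at each middle position the two competing values $p_t$ and $p_{t+1}$ have no value of the set $W := \{p_j, \ldots, p_k\}$'s complement lying strictly between them. A small point worth setting up carefully is that (1) and (2) together are not merely necessary but in fact equivalent to the full order-isomorphism, since the only comparisons they leave unchecked are between pairs of outside entries, which agree automatically.

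With this dictionary in hand the forward direction is short. If $p_j$ and $p_k$ lie in a common run, then (a run being a contiguous monotone block of bonded entries) positions $j$ through $k$ are consecutive and pairwise bonded, so $W$ is a block of $k-j+1$ consecutive integers appearing monotonically. I would then verify conditions (1) and (2) directly: the two middle blocks are monotone shifts of one another, and every value outside $W$ lies entirely below or entirely above the whole block, so no outside value can separate $p_t$ from $p_{t+1}$.

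For the reverse direction I would extract the run structure from (1) and (2). From (1), comparing the two middle blocks at consecutive positions forces all of the differences $p_{t+1}-p_t$ for $j \le t \le k-1$ to share a single sign, so $p_j, \ldots, p_k$ is monotone. From (2) I would argue that $W$ is an interval of integers: if some outside value $g$ satisfied $\min W < g < \max W$, then the contiguous positions $j, \ldots, k$ would split into those carrying values below $g$ and those above, both nonempty, so some adjacent pair $p_t, p_{t+1}$ must straddle $g$, contradicting (2). A monotone sequence whose value set is a block of consecutive integers must step by exactly $\pm 1$ at each stage, so every adjacent pair among positions $j, \ldots, k$ is a bond and $p_j, p_k$ lie in a common run.

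I expect the reverse direction to be the main obstacle, and within it the interval claim drawn from (2): monotonicity falls out almost formally from the shift structure of (1), but ruling out gaps in $W$ relies on the observation that a contiguous block of positions whose values are separated by a forbidden value must contain an adjacent straddling pair. The other delicate task is purely organizational, namely setting up the alignment precisely enough that conditions (1) and (2) are seen to be equivalent to the order-isomorphism, so that the same dictionary drives both implications.
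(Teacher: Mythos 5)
Your proof is correct, but it takes a genuinely different route from the paper's. The paper proves the hard (reverse) direction by induction on $k-j$: the base case compares the $j$th entries of $del(p,j)$ and $del(p,j+1)$ to force $|p_j - p_{j+1}| = 1$, and the inductive step compares the $(k-1)$st entries of $del(p,j)$ and $del(p,k)$ to deduce that $(p_{k-1},p_k)$ is a bond, which reduces the problem to $del(p,j) = del(p,k-1)$ and lets the induction hypothesis supply the rest of the run. You instead avoid induction entirely: you recast $del(p,j)=del(p,k)$ as an exact dictionary --- order-isomorphism of the two aligned length-$(n-1)$ subsequences, which you correctly reduce to your conditions (1) (the shifted middle blocks $(p_{j+1},\ldots,p_k)$ and $(p_j,\ldots,p_{k-1})$ are order-isomorphic) and (2) (no value outside $W=\{p_j,\ldots,p_k\}$ lies strictly between an adjacent pair $p_t,p_{t+1}$) --- and then both implications are read off from this single dictionary. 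Your reverse direction decomposes cleanly into monotonicity (chaining consecutive comparisons in (1)) plus the claim that $W$ is an interval of integers (the straddling-pair argument from (2)), and a monotone sequence over an interval of consecutive values necessarily steps by $\pm 1$, i.e., is a run; all of these steps check out, including the subtle point that (1) and (2) jointly capture the full isomorphism because outside-outside comparisons are automatic. What each approach buys: yours makes the structural content explicit (a run is precisely a monotone block over consecutive values) and treats both directions uniformly, at the cost of a more elaborate setup; the paper's induction is shorter, needs only local comparisons of single entries, and rehearses a technique (tracking which entry moves to fill a given position after deletion) that the paper reuses later in the proofs of Theorem \ref{otherthm} and Theorem \ref{final}.
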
 
  \begin{proof} The forward direction is clear, since removing any element of a
  run and relabelling simply results in a shorter run. 

  The other direction takes a bit more work. Suppose that there are $j,k$ with $1
  \leq j < k \leq n$ with $del(p,j) = del(p,k)$. We proceed by induction on $k-j$. 

  Suppose that $k =j+ 1$. Assume first that $p_j < p_{j+1}$, and consider the
  $j$th entry of $del(p,j)=del(p,j+1)$. By the definition of $del$, the $j$th
  entry of $del(p,j)$ is $p_{j+1}-1$, and the same entry in $del(p,j+1)$ is
  $p_j$. Therefore, we see that $p_{j+1}-1=p_j$, which means that
  $(p_j,p_{j+1})$ is a bond. Again, the case where $p_{j+1}<p_j$ follows
  similarly. 

  Now assume by way of induction that the statement holds when $k=j+m-1$, and
  suppose there exists $1\leq j < k \leq n$ such that $k-j = m$ and
  $del(p,j)=del(p,k)$. Assume first that $p_j < p_{k}$. $del(p,j)=del(p,k)$ implies, in
  particular, that the $(k-1)$st entries on both sides of the equality are equal.
  By definition, the $k-1$ entry of $del(p,j)$ is $p_k-1$, while the $k-1$ entry of
  $del(p,k)$ is either $p_{k-1}$ or $p_{k-1}-1$. The latter case would imply that
  $p_{k-1} = p_k$, a contradiction, and so it follows that $p_{k-1} = p_k$. 

  By what has already been proved, $del(p,k-1) = del(p,k)$ since these entries form a
  bond. But then $del(p,j) = del(p,k) = del(p,k-1)$, and so by the induction hypothesis
  the entries $(p_j p_{j+1} \ldots p_{k-1})$ form a run. Finally, $p_k - 1 =
  p_{k-1}$ implies that $(p_j p_{j+1} \ldots p_{k-1} p_k)$ is a length $m$ run.
  Once more, the case where $p_j > p_k$ follows similarly, and the lemma is
  proved.

  \end{proof}

  The simplest examples of permutations with runs are the ascending and descending
  permutations. Removing any element from the ascending (descending) permutation
  of length $n$ and renumbering results in the ascending (descending) permutation
  of length $n-1$. In other words, the $(n-1)$-pattern set of either of these
  permutations has size $1$, and the lemma shows that these are the only
  permutations with this property. 

  We can now establish our connection between the number of bonds and the number
  of $(n-1)$-patterns. Lemma \ref{biglem} directly implies the following theorem.

  \begin{thm} \label{bigthm} Let $p \in S_n$. Then $|D_1(p)| = n- C(p)$. 
  \end{thm}

  This leads to a number of useful corollaries. The first is clear, and provides
  motivation for generalization.

  \begin{cor} \label{distinct} 
  A permutation has the maximum number $(n-1)$-patterns if
  and only if it contains no bonds. 
  \end{cor}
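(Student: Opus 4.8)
The plan is to read this off directly from Theorem \ref{bigthm}. That theorem supplies the exact formula $|D_1(p)| = n - C(p)$ for every $p \in S_n$, and by definition $C(p)$, the number of bonds of $p$, is a nonnegative integer. Hence the formula immediately recovers the bound $|D_1(p)| \leq n$ noted at the opening of this section, and shows that the value $n$---the maximum the statistic $|D_1(p)|$ can attain---is reached exactly when $C(p) = 0$.

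Concretely, I would argue both directions from the single equation. For the forward direction, suppose $p$ attains the maximum, i.e.\ $|D_1(p)| = n$; then $C(p) = n - |D_1(p)| = 0$, so $p$ has no bonds. For the reverse direction, if $p$ contains no bonds then $C(p) = 0$, and the formula gives $|D_1(p)| = n$, the maximum. No genuine obstacle arises here: all of the substance lies in Lemma \ref{biglem} and Theorem \ref{bigthm}, and the corollary is simply the observation that the upper bound $n$ is met precisely when the subtracted quantity $C(p)$ vanishes. The only point requiring a moment's care is the interpretation of ``maximum number'' as the theoretical upper bound $n$ rather than the attained supremum over $S_n$; under this reading the biconditional is exactly the statement $|D_1(p)| = n \iff C(p) = 0$, which the formula delivers at once.
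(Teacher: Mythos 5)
Your proof is correct and matches the paper's approach exactly: the paper offers no separate argument, stating only that the corollary is a clear consequence of Theorem \ref{bigthm}, which is precisely the deduction you spell out ($|D_1(p)| = n - C(p)$ with $C(p) \geq 0$, so $|D_1(p)| = n$ iff $C(p) = 0$). Your observation that ``maximum'' must be read as the upper bound $n$ (i.e.\ all $n$ deletions give distinct patterns) rather than the supremum attained over $S_n$ is a worthwhile clarification, since for $n \leq 3$ no bond-free permutations exist.
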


  Theorem \ref{bigthm} also provides a simple proof of the following local
  property of the permutation pattern poset.

  \begin{cor} \label{n2+1} 
  If $q \in S_{n-1}$, then $|I_1(q)| = n^2 - 2n + 2 $. In other words, every
  $(n-1)$-permutation is contained in exactly $(n-1)^2 +1$ $n$-permutations. 
  \begin{proof}

  By definition, the set $I_1(q) = \{ins(q,j,k) : 1 \leq j , k \leq n\}$, so we see
  that $|I_1(q)| \leq n^2$.

  Now, a permutation $p \in S_n$ is contained in $I_1(q)$ more than once exactly
  when $q$ can be obtained in more than one way by deleting a entry of $p$. It
  follows that $q$ is contained in a permutation $p \in S_n$ more than once
  exactly when $ins(q,j,k) = ins(q,j',k')$ where $(j,k) \not = (j',k')$. By the lemma,
  this happens exactly when the $j$th entry of $ins(q,j,k)$ is a part of the same
  run as the $j'$ entry of $(ins(q,j',k'))$. We can prevent this from occuring by
  never inserting an element just to the right and directly above or below an
  existing element of $q$, as this ensures that any new bonds can be created in
  exactly one way. 

  This eliminates exactly $2(n-1)$ choices for inserting an entry into $q$, and so
  therefore $|I_1(q)| = n^2 - 2(n-1) = (n-1)^2 +1$, and the proof
  is complete.  
  \end{proof}
  \end{cor}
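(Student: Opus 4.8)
The plan is to view $I_1(q)$ as the image of the map $(j,k) \mapsto ins(q,j,k)$ on $[n]\times[n]$, so that $|I_1(q)|$ equals $n^2$ minus the number of \emph{redundant} pairs, i.e.\ pairs producing a permutation already obtained from another pair. The engine of the whole argument is the inverse relation $ins(del(p,i),i,p_i)=p$: it shows that for a fixed $p \in I_1(q)$, the pairs $(j,k)$ with $ins(q,j,k)=p$ are exactly the pairs $(i,p_i)$ for which $del(p,i)=q$. Hence the number of pairs producing $p$ equals the number of entries of $p$ whose deletion recovers $q$, and counting $|I_1(q)|$ reduces to understanding these fiber sizes.

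Next I would invoke Lemma~\ref{biglem} to pin down that count. Whenever two positions $i$ and $i'$ both satisfy $del(p,i)=q=del(p,i')$, the lemma forces $p_i$ and $p_{i'}$ to lie in a common run of $p$ — necessarily the run containing the inserted entry — and conversely every entry of that run recovers $q$ upon deletion. Thus the multiplicity of each $p$ in the list $\{ins(q,j,k)\}$ is exactly the length of the run of $p$ containing its inserted entry.

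With the fibers identified as runs, the plan is to fix a canonical pair for each $p$ and count the rest. For each $p$ I would declare canonical the insertion of the \emph{leftmost} entry of its run; then a pair $(j,k)$ is redundant precisely when the inserted entry, sitting at position $j$ of $p$, has a left neighbor in the same run, i.e.\ when it forms a bond with the entry immediately to its left. Tracking the relabelling, the inserted entry takes value $k$ while its left neighbor is the relabelled $q_{j-1}$, and a short check shows these bond iff $k \in \{q_{j-1},\, q_{j-1}+1\}$. Since both of these values lie in $[n]$, for each of the $n-1$ positions $j \in \{2,\dots,n\}$ there are exactly two redundant values of $k$, giving $2(n-1)$ redundant pairs and therefore $|I_1(q)| = n^2 - 2(n-1) = (n-1)^2+1$.

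The main obstacle is the middle step: making precise, via Lemma~\ref{biglem}, that the fibers of the insertion map are exactly the runs, and that ``redundant'' is equivalent to ``creates a bond with the left neighbor,'' so that the canonical representative is well defined and no permutation is double counted. Once the run description is in place, the relabelling bookkeeping that isolates the two forbidden values per position is routine.
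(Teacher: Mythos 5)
Your proposal is correct and takes essentially the same route as the paper: both count insertion pairs $(j,k)$ over $[n]\times[n]$, use Lemma~\ref{biglem} to identify when two insertions coincide (the inserted entries lie in a common run), and discard exactly the $2(n-1)$ insertions that create a bond with the left neighbor --- the paper's ``just to the right and directly above or below an existing element'' is precisely your condition $k \in \{q_{j-1},\, q_{j-1}+1\}$. Your fiber/canonical-representative formulation is just a more explicit writeup of the same argument.
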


\section{Expectation and Variance of $|D_1(p)|$}

  We now examine the distribution of the number of $(n-1)$-patterns in a
  randomly chosen $n$-permutation $p$ by first examining the distribution of
  bonds. Kaplansky and Wolfowitz presented in \cite{kap} and \cite{wolf} the
  asymptotic distribution of the number of bonds in a random permutation. Using
  more modern techniques of generating function analysis we are able to improve
  upon their results and obtain \emph{exact} formulas for the expectation and
  the variance for the number of bonds in a random permutation. Theorem
  \ref{bigthm} allows us to translate these into the results on fixed-length
  patterns.

  Throughout this section, we will let $\varphi:S_n \rightarrow \mathbb{Z}^{\geq
  0}$ be the variable indicating the number of distinct $(n-1)$-patterns of an
  $n$-permutation, and $\chi:S_n \rightarrow \mathbb{Z}^{\geq 0}$ be the variable
  indicating the number of bonds. Our main tool will be multivariate generating
  functions, but first we note that the $\Em$ can be obtained directly using our
  connection to pattern containment.

  \begin{prop} \label{expect}
  The expectation $\Em =n -  \frac{2(n-1)}{n}$, which approaches $n-2$ as $n$
  increases.  
  \end{prop}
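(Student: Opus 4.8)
The plan is to avoid computing the bond distribution directly and instead exploit the duality between the deletion and insertion maps, as hinted by the phrase ``directly using our connection to pattern containment.'' By Theorem~\ref{bigthm} we know $\varphi = |D_1(p)| = n - C(p)$, so $\Em = n - \mathbb{E}(\chi)$; but rather than evaluate $\mathbb{E}(\chi)$ via indicator variables, I would compute $\Em$ in one stroke through a double-counting argument over incidence pairs.

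First I would write the expectation as an average over the symmetric group:
\[
\Em = \frac{1}{n!} \sum_{p \in S_n} |D_1(p)|.
\]
The key observation is that the sum $\sum_{p \in S_n} |D_1(p)|$ counts ordered pairs $(p,q)$ with $p \in S_n$, $q \in S_{n-1}$, and $q \in D_1(p)$. Since $D_1(p)$ is precisely the set of $(n-1)$-patterns of $p$, and since the inverse relationship between $del$ and $ins$ established earlier shows that $I_1(q)$ is exactly the set of $n$-permutations containing $q$ as a pattern, the conditions $q \in D_1(p)$ and $p \in I_1(q)$ are equivalent. Therefore the same set of incidence pairs is counted by summing over the smaller permutations:
\[
\sum_{p \in S_n} |D_1(p)| = \sum_{q \in S_{n-1}} |I_1(q)|.
\]

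Next I would apply Corollary~\ref{n2+1}, which tells us that $|I_1(q)| = (n-1)^2 + 1$ for \emph{every} $q \in S_{n-1}$, independent of $q$. Since there are $(n-1)!$ such permutations, the right-hand sum collapses to $(n-1)!\,\bigl((n-1)^2 + 1\bigr)$. Dividing by $n!$ then gives
\[
\Em = \frac{(n-1)!\,\bigl((n-1)^2 + 1\bigr)}{n!} = \frac{(n-1)^2 + 1}{n},
\]
and a short algebraic simplification rewrites $\frac{(n-1)^2 + 1}{n} = \frac{n^2 - 2n + 2}{n} = n - \frac{2(n-1)}{n}$, which is the claimed value.

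The argument has no real obstacle once the incidence-counting reformulation is in place; the only subtle point is being precise that $q \in D_1(p) \iff p \in I_1(q)$ so that the two sums genuinely count the same pairs. Everything else is a direct appeal to Corollary~\ref{n2+1} and bookkeeping. The asymptotic statement that $\Em \to n - 2$ follows immediately since $\frac{2(n-1)}{n} \to 2$.
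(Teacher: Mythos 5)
Your proof is correct and takes essentially the same approach as the paper: the paper's one-line argument, citing Corollary~\ref{n2+1} together with the identity $(n-1)!\,(n^2-2n+2) = n!\left(n-\frac{2(n-1)}{n}\right)$, is exactly the incidence double-count (summing $|I_1(q)|$ over $q \in S_{n-1}$ and dividing by $n!$) that you spell out. You have simply made the bookkeeping explicit.
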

  \begin{proof}
  This follows immediately from Corollary \ref{n2+1} and the identity
  $$(n-1)!(n^2-2n+2) = n!\left(n-\frac{2(n-1)}{n}\right).\qedhere $$ 
  \end{proof}

  Generating functions, however, allow us to go several steps further. It follows
  from Theorem \ref{bigthm} and the linearity of expectation that $\Em = n - \Ex$,
  which allows us to easily translate results about bonds into results about
  distinct $(n-1)$-patterns. We can now begin the construction of our multivariate
  generating function, using a technique similar to the cluster method of Goulden and
  Jackson.

  \begin{thm} \label{genfcn} 
  Let $a_{n,k}$ be the number of permutations of length $n$ which contain exactly
  $k$ bonds, and set $a_{0,0} = 1$. Then we have that $$F(z,u) := \dsum a_{n,k}
  x^n u^k = \sum_{m\geq 0} m!\left(z+\frac{2z^2(u-1)}{1-z(u-1)}\right)^m.$$
  \end{thm}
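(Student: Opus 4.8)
The plan is to replace the ``exactly $k$ bonds'' bookkeeping with a ``marked bonds'' bookkeeping, which linearizes the problem in the spirit of the Goulden--Jackson cluster method, and then to decompose each marked permutation into monotone runs. First I would substitute $u = 1+t$ and expand $u^k = \sum_j \binom{k}{j} t^j$, giving
$$F(z,1+t) = \sum_{n\geq 0} z^n \sum_{j\geq 0} t^j \sum_{k\geq 0} \binom{k}{j} a_{n,k} = \sum_{n\geq 0}\sum_{j\geq 0} b_{n,j}\, z^n t^j,$$
where $b_{n,j} = \sum_k \binom{k}{j} a_{n,k}$ counts pairs consisting of a permutation of $[n]$ together with a distinguished (``marked'') set of $j$ of its bonds. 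It then suffices to compute $\sum_{n,j} b_{n,j}\, z^n t^j$ in closed form and substitute $t = u-1$ at the very end.

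Second, and this is the combinatorial heart of the matter, I would set up a block decomposition of a marked permutation. Given a permutation $p$ with a marked set $S$ of $j$ bonds, the marked bonds group the positions of $p$ into maximal chains; reading left to right these yield blocks $\beta_1,\ldots,\beta_m$ with $m = n-j$ and sizes $s_1,\ldots,s_m \geq 1$ summing to $n$. As in the remark following Lemma \ref{biglem}, a chain of genuine bonds cannot reverse direction without repeating a value, so each block is a monotone run occupying a \emph{consecutive interval} of values; since these intervals are disjoint and cover $[n]$, they tile $[n]$. The claimed bijection sends $(p,S)$ to the triple consisting of: the reduced permutation $\pi \in S_m$ recording the relative order of the blocks by value; the ordered sequence of block sizes $s_1,\ldots,s_m$; and, for each block of size $\geq 2$, a choice of increasing or decreasing orientation. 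Conversely, any such triple reconstructs a unique $(p,S)$, the marked bonds being precisely the within-block bonds (boundary positions may accidentally form unmarked bonds, which is harmless and correctly counted).

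Third, I would read off the generating function from the bijection. The reduced permutation contributes a factor $m!$, and each block contributes a factor $B(z,t) = z + \sum_{s\geq 2} 2\,z^s t^{s-1}$, where $z^s$ tracks the $s$ positions/values, $t^{s-1}$ the $s-1$ marked bonds inside the block, and the coefficient is $1$ for a singleton (no orientation) and $2$ for a genuine run (two orientations). Since the $m$ blocks are chosen independently, this yields
$$\sum_{n\geq 0}\sum_{j\geq 0} b_{n,j}\, z^n t^j = \sum_{m\geq 0} m!\, B(z,t)^m.$$
Summing the geometric series gives $B(z,t) = z + \dfrac{2z^2 t}{1 - zt}$, and substituting $t = u-1$ produces exactly the stated formula.

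The main obstacle I anticipate is the rigorous verification of the bijection in the second step, and in particular the claim that the reduced permutation $\pi$ ranges \emph{freely} over all of $S_m$, independently of the block sizes and orientations. The point is that once the value-intervals are forced to tile $[n]$, specifying $\pi$ (the order of the intervals), the sizes, and the orientations determines $p$ and $S$ uniquely and with no compatibility constraints; care is also needed in the orientation bookkeeping, so that singletons are not double-counted and runs of length $\geq 2$ receive the correct factor of $2$. I would sanity-check the final formula against small cases (e.g. $n\leq 3$, where $a_{3,0}=0$, $a_{3,1}=4$, $a_{3,2}=2$) to confirm the coefficient extraction.
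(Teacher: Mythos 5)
Your proof is correct and follows essentially the same route as the paper: you mark (distinguish) bonds, decompose the marked permutation into monotone runs to obtain $G(z,t)=\sum_m m!\left(z+\frac{2z^2t}{1-zt}\right)^m$, and recover $F$ via the substitution $t=u-1$. The only difference is one of rigor, not of approach---your explicit binomial-transform justification of $F(z,1+t)=G(z,t)$ and your careful verification of the block bijection spell out details that the paper's proof leaves implicit.
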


  \begin{proof}
  First, we construct a generating function $G(z,u)= \dsum b_{n,k} x^n u^k$, where
  $b_{n,k}$ is the number of permutations of length $n$ with $k$
  \emph{distinguished} bonds. For example, $b_{n,0} = n!$, as every permutation
  can be written with no bonds distinguished, and no permutation is counted more
  than once. 

  The function $G(z,u)$ is easier to construct, as we can build an $n$-permutation
  with $k$ distinguished bonds by first specifying our distinguished ascending and
  descending runs, then permuting these runs with the remaining entries. Now, a
  run of length $j$ contains $j-1$ bonds, and we have the option of making each
  run either increasing or decreasing. This leads to 

  $$G(z,u) = \sum_{m\geq 0} m! \left(z+ \frac{2z^2 u}{1-zu} \right)^m.$$

  Now we can use the function $G$ to obtain a formula for $F$. Since $G$ counts
  only the distinguished bonds and $F$ counts every bond, we see that $F$ and $G$
  are related by the transformation $F(z,u+1) = G(z,u)$. Therefore $F(z,u) =
  G(z,u-1)$, and the theorem is proved.  
  \end{proof} 

  A simple transformation can be used to obtain a multivariate generating function
  which indicates the number of distinct $(n-1)$-patterns of a permutation.
  However, the function $F$ is more useful to work with, as we will see soon. 

  \begin{cor} 
    Let $d_{n,k}$ be the number of permutations of length $n$ with exactly $k$
    distinct $(n-1)$-patterns. Then $$ H(z,u) = 1+  \sum_{n\geq k \geq 1} d_{n,k}
    x^n u^k = \sum_{m\geq
    0}m!\left(zu+\frac{2zu^2(u^{-1}-1)}{1-zu(u^{-1}-1)}\right)^m. $$
  \end{cor}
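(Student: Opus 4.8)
The plan is to derive everything from the exact count $|D_1(p)| = n - C(p)$ of Theorem~\ref{bigthm} together with the bond generating function $F$ of Theorem~\ref{genfcn}, so that no new enumeration is needed. First I would record the combinatorial content of Theorem~\ref{bigthm} at the level of coefficients: a permutation of length $n$ has exactly $k$ distinct $(n-1)$-patterns if and only if it has exactly $n-k$ bonds. In the notation of Theorem~\ref{genfcn} this reads $d_{n,k} = a_{n,n-k}$, and it already pins down the index set, since the bond count satisfies $0 \le n-k \le n-1$, i.e.\ $1 \le k \le n$.

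Next I would convert this coefficient identity into a substitution on generating functions. Expanding,
\[ F(zu,u^{-1}) = \dsum a_{n,k}(zu)^n(u^{-1})^k = \dsum a_{n,k}\,z^n u^{\,n-k}, \]
and reindexing by the pattern count $k' = n-k$ turns the exponent of $u$ into exactly the number of distinct $(n-1)$-patterns, giving $F(zu,u^{-1}) = \sum_{n,k'} d_{n,k'}\, z^n u^{k'}$. I would then match this against the statement: the term $a_{0,0}=1$ supplies the isolated constant $1$ written out front of $H$, while for $n \ge 1$ the inequality $1 \le k' \le n$ established above is precisely the index range $\sum_{n\ge k\ge 1}$. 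Hence $H(z,u) = F(zu,u^{-1})$.

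Finally I would obtain the closed form by substituting $z \mapsto zu$ and $u \mapsto u^{-1}$ directly into the product formula of Theorem~\ref{genfcn} and simplifying inside the $m$th power: the denominator $1-z(u-1)$ becomes $1-zu(u^{-1}-1)$, the linear term $z$ becomes $zu$, and the quadratic numerator $2z^2(u-1)$ becomes $2(zu)^2(u^{-1}-1)$; collecting these three pieces gives the claimed closed form for $H$. The step I expect to be most delicate is not this algebra but justifying the substitution $u \mapsto u^{-1}$ as an operation on formal power series, since it a priori introduces negative powers of $u$. I would point out that the factor $(zu)^n$ raises every surviving monomial to $u$-degree $n-k \ge 1 > 0$, so the negative powers cancel and $H$ is a genuine power series in $z$ and $u$; making this explicit turns $H(z,u)=F(zu,u^{-1})$ into a legitimate identity of formal power series rather than a merely symbolic manipulation, after which the corollary is immediate from Theorem~\ref{genfcn}.
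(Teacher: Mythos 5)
Your proof is correct and takes essentially the same route as the paper: the paper's entire proof is the one-line remark that $|D_1(p)| = n - C(p)$ (Theorem \ref{bigthm}) forces $H(z,u) = F(zu,u^{-1})$, and your coefficient-level reindexing together with the check that no negative powers of $u$ survive simply makes that line explicit. The only caveat is that your (correct) substitution yields the numerator $2(zu)^2(u^{-1}-1) = 2z^2u^2(u^{-1}-1)$, so the factor $2zu^2(u^{-1}-1)$ printed in the corollary's statement is a typo in the paper rather than what your computation actually produces.
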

  \begin{proof}
    Since $|M(p)| = n-C(p)$, it follows immediately that $H(z,u) = F(zu, u^{-1})$. 
  \end{proof}

  We can now coax several results out of the function $F(z,u)$. To start,
  plugging in $u=0$ gives the generating function for permutations with no bonds.
  Expanding, we see that 
  $$F(z,0) = 1+z+2z^4+14z^5+90z^6+646z^7 + 5242z^8 \ldots.$$

  The sequence, $1,1,0,0,2,14,90,646,5242, \ldots$ is A002464 in the OEIS, and is
  easily seen to be equal to the number of ways to place $n$ non-attacking kings
  on an $n \times n$ chessboard with one king in each row and each column. It was
  shown in \cite{tauraso} that this sequence is asymptotic to $n!/e^2$, and so
  Corollary \ref{distinct} implies the following. 

  \begin{prop} 
  The probability that a randomly selected $n$-permutation has all distinct
  $(n-1)$-patterns tends to $1/e^2$ as $n \rightarrow \infty$.  
  \end{prop}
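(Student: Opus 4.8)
The plan is to reduce the probabilistic statement to the asymptotic enumeration that the preceding material has already assembled, so that almost nothing beyond bookkeeping remains. First I would rephrase the event in terms of bonds. Since $|D_1(p)| \le n$ for every $p \in S_n$, a permutation has \emph{all} distinct $(n-1)$-patterns precisely when $|D_1(p)| = n$; by Theorem \ref{bigthm} this says $C(p) = 0$, and Corollary \ref{distinct} records exactly this equivalence, namely that attaining the maximal number of $(n-1)$-patterns is the same as containing no bonds. Hence the event whose probability we want is simply that a uniformly random $p \in S_n$ is bond-free.

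Next I would count the bond-free permutations. Setting $u = 0$ in the generating function $F(z,u)$ of Theorem \ref{genfcn} annihilates every contribution coming from a bond, so $[z^n]F(z,0)$ is exactly the number of bond-free permutations in $S_n$; the text identifies this coefficient sequence as A002464. The desired probability is then the ratio of this count to the size $n!$ of $S_n$ under the uniform measure, i.e. $[z^n]F(z,0)/n!$. Invoking the asymptotic $[z^n]F(z,0) \sim n!/e^2$ established in \cite{tauraso}, this ratio tends to $1/e^2$, which is the claim, and the proof is essentially complete.

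The only genuine content lies in the asymptotic $[z^n]F(z,0) \sim n!/e^2$, which is imported from \cite{tauraso}; I do not expect any real obstacle once it is in hand. Were one to want a self-contained argument, the natural route is inclusion--exclusion over the $n-1$ adjacent position-pairs that could carry a bond: letting $A_i$ denote the permutations with a bond at positions $i,i+1$, one writes the bond-free count as $\sum_{S}(-1)^{|S|}\bigl|\bigcap_{i \in S} A_i\bigr|$, where each forced set $S$ fuses entries into monotone runs of consecutive values. This is exactly the specialization $F(z,0) = G(z,-1)$ already visible in the proof of Theorem \ref{genfcn}, and after dividing by $n!$ the surviving sum is a truncation of the series for $e^{-2}$. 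Setting up that sum and controlling its tail would be the sole technical point of a from-scratch proof; but given the cited asymptotic the result follows immediately.
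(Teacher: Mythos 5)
Your proposal is correct and follows essentially the same route as the paper: translate ``all distinct $(n-1)$-patterns'' into ``bond-free'' via Corollary \ref{distinct}, identify $[z^n]F(z,0)$ (sequence A002464) as the count of bond-free permutations, and invoke the asymptotic $n!/e^2$ from \cite{tauraso} to conclude. Your closing sketch of a self-contained inclusion--exclusion argument via $F(z,0)=G(z,-1)$ is a nice addition but is not needed, and is not part of the paper's argument either.
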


  We can take this a step further, and use the function $F(z,u)$ to determine the
  expected number of bonds in a random $n$-permutation. As described in
  \cite{flajolet}, we have that 

  $$ \Ex = \frac{[z^n] \partial_u F(z,u) |_{u=1}}{n!} = \frac{1}{n!}[z^n]
  \partial_u \left( \sum_{m\geq 0} m!\left(z+\frac{2z^2(u-1)}{1-z(u-1)}\right)^m
  \right). $$  
   
  Taking the partial derivative with respect to $u$, we find that 

  $$\partial_u F(z,u) = \partial_u \left( \sum_{m\geq 0} m!\left(
  z+\frac{2z^2(u-1)}{1-z(u-1)} \right)^m \right) = $$

  $$
  \sum_{m\geq 0} m \cdot m! \frac{ \left(\ds z+ \frac{2z^2(u-1)}{1-z(u-1)}
  \right)^m \left( \ds \frac{2z^2}{1-z(u-1)} + \frac{2z^3(u-1)}{(1-z(u-1))^2}
  \right) }{\ds z+ \frac{2z^2(u-1)}{1-z(u-1)}}.$$

  Plugging in $u=1$ simplifies this expression greatly, leaving

  $$\partial_u F(z,u)|_{u=1} =  \sum_{m \geq 0} 2m! \cdot m z^{m+1} = 
  \sum_{m\geq 1} 2(m-1)! \cdot (m-1) z^m .$$

  From this it follows that 

  $$\Ex = \frac{[z^n] \partial_u F(z,u) |_{u=1}}{n!} 
  = 2\frac{(n-1)! \cdot (n-1)}{n!} 
  = 2\frac{(n-1)}{n}.$$

  Finally, by using linearity of expectation and the fact that $\varphi = n- \chi$,
  we find that $\Em = n - \Ex = n - 2\frac{n-1}{n}$, in agreement with Proposition
  \ref{expect}.

  The variance is given by $\Vx = \Exs - \Ex ^2$, and so we find that 
  $$ \Vx = \mathbb{E}(\chi(\chi - 1)) + \mathbb{E}(\chi) - \mathbb(\chi)^2.$$ 
  The factorial moment can be computed directly from the bivariate generating
  function $F$ as follows:
  $$\mathbb{E}(\chi (\chi - 1)) = \frac{[z^n] \partial_u ^2 F(z,u)
  |_{u=1}}{n!}.$$ 

  This leads to 

  $$ \begin{aligned} \Vx 
    &= \frac{[z^n] \partial_u ^2 F(z,u)
  |_{u=1}}{n!} +\frac{[z^n] \partial_u F(z,u) |_{u=1}}{n!} - \left(\frac{[z^n]
  \partial_u  F(z,u) |_{u=1}}{n!} \right) ^2 \\ 
    &= \frac{[z^n] \partial_u ^2
  F(z,u) |_{u=1}}{n!} + 2\frac{n-1}{n} - \left( 2 \frac{n-1}{n} \right) ^2.
  \end{aligned} $$

  We begin by taking the second derivative of $F(z,u)$ with respect to $u$, which
  gives:

  $$\begin{aligned} 
  \partial_u F(z,u) = \ds \sum_{m\geq 0}m! \cdot m & 
  \left( \frac{m \left(z+ \frac{2z^2(u-1)}{1-z(u-1)} \right)^m
  \left(\frac{2z^2}{1-z(u-1)} + \frac{2z^3(u-1)}{(1-z(u-1))^2} \right)^2}{
  \left(z+ \frac{2z^2(u-1)}{1-z(u-1)} \right)^2} \right. \\ 
    &  + \left.
    \frac{\left(z+ \frac{2z^2(u-1)}{1-z(u-1)}\right)^m
  \left(\frac{4z^3}{(1-z(u-1))^2} + \frac{4z^4 (u-1)}{(1-z(u-1))^3} \right)}{
  z+\frac{2z^2(u-1)}{1-z(u-1)}} \right. \\ 
  & - \left.
  \frac{\left(z+\frac{2z^2(u-1)}{1-z(u-1)} \right)^m \left(\frac{2z^2}{1-z(u-1)} +
  \frac{2z^3(u-1)}{(1-z(u-1))^2} \right)^2}{ \left(z+\frac{2z^2(u-1)}{1-z(u-1)}
  \right)^2} \right) . 
   \end{aligned} $$

  Once again, setting $u=1$ simplifies this expression immensely:

  $$ \partial_u ^2 F(z,u) |_{u=1} = \ds \sum_{m\geq 0} 4m! \cdot m^2 z^{m+2} \\
  = \sum_{m \geq 2} 4(m-2)!(m-2)^2z^m. $$

  Which produces:

  $$\begin{aligned} \Vx &= \frac{[z^n] \partial_u ^2 F(z,u) |_{u=1}}{n!} +
  2\frac{n-1}{n} - \left( 2 \frac{n-1}{n} \right) ^2\\ 
  &= \frac{4(n-2)!(n-2)^2}{n!} + 2\frac{n-1}{n} - \left( 2 \frac{n-1}{n} \right)^2 \\
  &= 4\frac{(n-2)^2}{n(n-1)}+  2\frac{n-1}{n} -  4 \frac{(n-1)^2}{n^2}.
  \end{aligned}$$

  Which converges to 2 for large $n$. From the fact that $\varphi = n- \chi$, it
  follows that $\Vm = \Vx$. We summarize this in the following theorem.

  \begin{thm} 
  Let $m: S_n \rightarrow \Z$ be the variable indicating the number of distinct
  $(n-1)$-patterns of a permutation $p \in S_n$. Then we have: 

  $$\Em = n - 2\frac{n-1}{n} \text{ and }
  \Vm = 4\frac{(n-2)^2}{n(n-1)}+  2\frac{n-1}{n} -  4 \frac{(n-1)^2}{n^2}.$$ 

  \end{thm}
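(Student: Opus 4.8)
The plan is to obtain both formulas from the bivariate generating function $F(z,u)$ of Theorem~\ref{genfcn}, using the identity $\varphi = n-\chi$ supplied by Theorem~\ref{bigthm}. Because negating a random variable and adding a constant leaves the variance unchanged, we have $\Vm = \mathbb{V}(n-\chi)=\Vx$, while linearity of expectation gives $\Em = n-\Ex$. It therefore suffices to compute the first two moments of the bond count $\chi$, which I would extract as coefficients of $F$.

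For the expectation I would invoke the standard relation $\Ex = \frac{1}{n!}[z^n]\,\partial_u F(z,u)\big|_{u=1}$; the value $\Em = n - \frac{2(n-1)}{n}$ then follows and agrees with the direct count of Proposition~\ref{expect}, which serves as a convenient independent check. For the variance I would pass to factorial moments, writing $\Vx = \mathbb{E}(\chi(\chi-1)) + \Ex - \Ex^2$ and computing the second factorial moment as $\mathbb{E}(\chi(\chi-1)) = \frac{1}{n!}[z^n]\,\partial_u^2 F(z,u)\big|_{u=1}$.

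The heart of the argument is differentiating $F(z,u)=\sum_{m\ge 0} m!\bigl(z+\frac{2z^2(u-1)}{1-z(u-1)}\bigr)^m$ once and twice in $u$, then evaluating at $u=1$. I expect this to be the main obstacle, since the derivatives are unwieldy rational expressions. The bookkeeping is tamed, however, by one observation: at $u=1$ the inner base $z+\frac{2z^2(u-1)}{1-z(u-1)}$ collapses to $z$, the factor $1-z(u-1)$ becomes $1$, and every term still carrying an explicit factor of $(u-1)$ vanishes. After this collapse the sums reduce to $\partial_u F|_{u=1} = \sum_{m\ge 0} 2\,m!\,m\,z^{m+1}$ and $\partial_u^2 F|_{u=1} = \sum_{m\ge 0} 4\,m!\,m^2\,z^{m+2}$.

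Finally I would reindex these series and read off the relevant coefficients: $[z^n]$ of the first picks out $m=n-1$, giving $\Ex = 2\frac{n-1}{n}$, while $[z^n]$ of the second picks out $m=n-2$, giving $\mathbb{E}(\chi(\chi-1)) = \frac{4(n-2)!(n-2)^2}{n!} = 4\frac{(n-2)^2}{n(n-1)}$. Substituting these into $\Vx = \mathbb{E}(\chi(\chi-1)) + \Ex - \Ex^2$ and invoking $\Vm=\Vx$ yields the claimed variance, while the expectation formula is already in hand, completing the proof.
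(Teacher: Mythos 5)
Your proposal is correct and follows essentially the same route as the paper: both translate $\varphi = n-\chi$ via Theorem \ref{bigthm}, extract $\Ex$ and the factorial moment $\mathbb{E}(\chi(\chi-1))$ from $\partial_u F$ and $\partial_u^2 F$ at $u=1$, and assemble the variance as $\mathbb{E}(\chi(\chi-1)) + \Ex - \Ex^2$. Your simplifications of the derivatives at $u=1$ (the base collapsing to $z$, yielding $\sum 2\,m!\,m\,z^{m+1}$ and $\sum 4\,m!\,m^2\,z^{m+2}$) and the resulting coefficient extractions match the paper's computation exactly.
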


  An immediate consequence, we see that for large $n$ these approach $\Em = n-2$
  and $\Vm = 2$ respectively, implying as a special case the results of \cite{kap}
  and \cite{wolf}. These same techniques can be applied to recursively calculate
  higher moments.

\section{Patterns of other sizes}
	
  We turn our attention now to determining the number $|D_k(p)|$ of distinct
  $(n-k)$-patterns of a permutation, for $k >1$. In particular, we seek to
  determine which permutations (if any) have the property that $|D_k(p)| =
  \binom{n}{k}$, the maximum number of possible $(n-k)$-patterns. To start, we
  generalize our notion of bonds with the following definition. 
                  
  \begin{defn} 
  Let $p = p_1p_2 \ldots p_n \in S_n$ be any permutation. Define a metric on the
  entries of $p$ by $d_p(i,j) = |i-j|+|p_i - p_j|$. Define the \emph{minimum gap}
  of a permutation $p$ to be $mg(p) = min\{d_p(i,j):1\leq i < j  \leq n\}$.
  \end{defn}

  If the permutation is plotted on a lattice, then the metric $d$ is just the
  taxicab metric on $\mathbb{Z}^2$. It is easy to see that $(p_i,p_j)$ is a bond
  if and only if $d(i,j) = 2$. Therefore, we see that $p$ has all distinct
  $(n-1)$-patterns if and only if $mg(p) \geq 3$. This motivates a
  generalization of Corollary \ref{distinct}, after we establish some suitable
  notation.

  \begin{defn} 
  Let $S=\{a_1,a_2, \ldots a_k\}\subseteq [n]$, with $1 \leq a_1<a_2 < \ldots <a_k
  \leq n$. We denote $del( \ldots del(del(del(p,a_k),a_{k-1}),a_{k-2}), \ldots
  ,a_1)$ by $del(p;S)$.  In other words, to obtain $del(p;S)$ we remove
  $p_{a_1}, p_{a_2}, \ldots p_{a_k}$ from $p$ and renumber the remaining entries
  with respect to order.
  \end{defn}

  \begin{defn} 
  Let $p=p_1p_2 \ldots p_n$ be a permutation, and $1\leq i < j \leq n$ then the
  \emph{span} of the entries $p_i$ and $p_j$ is denoted $span_p(i,j)$ and is
  defined to be the set of indices for the entries that lie between $p_i$
  and $p_j$ either vertically or horizontally. 

  Formally, if $p_i < p_j$, then $span_p(i,j) = \{k : i<k<j \text{ \ or \ } p_i
  < p_k < p_j \}$, with a similar definition when $p_i> p_j$.  
  \end{defn}

  \begin{lem} \label{span}
  If $p=p_1p_2 \ldots p_n$ is a permutation with $mg(p) = k$, and if $1\leq i
  < j \leq n$ are such that $d_p(i,j) = k$, then $|span_p(i,j)| = k-2$.
  \end{lem}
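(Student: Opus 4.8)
The plan is to compute $|span_p(i,j)|$ directly by counting separately the indices that lie between $p_i$ and $p_j$ horizontally and those that lie between them vertically, and then reconciling the two counts by inclusion--exclusion. Throughout I would assume without loss of generality that $p_i < p_j$ (the case $p_i > p_j$ being symmetric), and set $a = j-i \geq 1$ and $b = p_j - p_i \geq 1$, so that the hypothesis $d_p(i,j) = k$ reads $a + b = k$. The indices $\ell$ with $i < \ell < j$ number exactly $a-1$, and, because $p$ is a permutation, the indices $\ell$ with $p_i < p_\ell < p_j$ number exactly $b-1$, one for each of the intermediate values $p_i + 1, \dots, p_j - 1$. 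Since by definition $span_p(i,j)$ is the union of these two sets, inclusion--exclusion gives $|span_p(i,j)| = (a-1) + (b-1) - r$, where $r$ counts the indices lying \emph{both} positionally and value-wise between $p_i$ and $p_j$.

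Everything then reduces to showing $r = 0$, and this is the step I expect to be the crux. Suppose instead that some index $\ell$ satisfies both $i < \ell < j$ and $p_i < p_\ell < p_j$; geometrically, the point $(\ell, p_\ell)$ lies strictly inside the axis-parallel rectangle determined by $(i,p_i)$ and $(j,p_j)$. For such an $\ell$ the taxicab distance is additive along the rectangle:
$$d_p(i,\ell) + d_p(\ell,j) = (\ell - i) + (p_\ell - p_i) + (j - \ell) + (p_j - p_\ell) = (j-i) + (p_j - p_i) = d_p(i,j) = k.$$
Because $\ell$ is distinct from both $i$ and $j$, each of $d_p(i,\ell)$ and $d_p(\ell,j)$ is a genuine gap and hence at least $mg(p) = k$, forcing their sum to be at least $2k$. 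Combined with the displayed identity this gives $k \geq 2k$, impossible since $k = mg(p) \geq 2 > 0$. Hence no such interior index $\ell$ exists and $r = 0$.

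With the overlap eliminated, the count collapses to $|span_p(i,j)| = (a-1) + (b-1) = a + b - 2 = k - 2$, as claimed. The only place the minimality of $mg(p)$ enters is in ruling out an interior lattice point of the bounding rectangle; all remaining steps are bookkeeping that is insensitive to the orientation of $p_i$ versus $p_j$, so the symmetric case requires nothing beyond interchanging the roles of the horizontal and vertical directions in the definition of the span.
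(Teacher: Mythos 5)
Your proof is correct and follows essentially the same route as the paper's: the paper's ``clear'' upper bound $|span_p(i,j)| \leq k-2$ is exactly your $(a-1)+(b-1)$ count, and both arguments hinge on ruling out an entry lying strictly inside the bounding rectangle via the minimality of $mg(p)$. Your write-up simply makes explicit (via inclusion--exclusion and the taxicab additivity identity) what the paper leaves as assertion.
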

  \begin{proof}
  It is clear that $|span_p(i,j)| \leq k-2$. The only way in which $|span_p(i,j)|
  < k-2$ would hold is if there existed an entry $p_m$ which was in between $p_i$
  and $p_j$ both vertically and horizontally. However, this $p_m$ would contradict
  the minimality of $k$, so $|span_p(i,j)| = k-2$.  
  \end{proof}

  Corollary \ref{ratchet} now follows immediately from the lemma.

  \begin{cor} \label{ratchet} 
  If $p=p_1p_2 \ldots p_n$ is an $n$-permutation with $mg(p) =k$, then
  $mg(del(p,i)) \geq k-1$ for all $i \in [n]$.
  \end{cor}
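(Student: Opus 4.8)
The plan is to track how the taxicab metric $d_p$ transforms under a single deletion and then invoke Lemma \ref{span} to eliminate the one dangerous case. Fix $i \in [n]$ and set $p' = del(p,i)$. Any two entries of $p'$ arise from two entries of $p$ at positions $a,b$ with $a<b$ and $a,b\neq i$. Deleting the $i$th entry and relabelling acts very cleanly on lattice coordinates: an entry's horizontal coordinate drops by $1$ exactly when its original index exceeds $i$, and its vertical coordinate drops by $1$ exactly when its original value exceeds $p_i$. Writing $d_{p'}$ for the metric on $p'$, the distance between our two chosen entries therefore satisfies
$$ d_{p'} = d_p(a,b) - [\,a < i < b\,] - [\,p_i \text{ lies strictly between } p_a \text{ and } p_b\,], $$
where each Iverson bracket contributes $1$ when its condition holds and $0$ otherwise. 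In other words, the only mechanisms by which the distance can shrink are $i$ falling horizontally between $a$ and $b$, or $p_i$ falling vertically between $p_a$ and $p_b$.

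First I would dispatch the generic case. If $d_p(a,b) \geq k+1$, then since the two brackets remove at most $2$, we immediately get $d_{p'} \geq (k+1)-2 = k-1$. So the entire content of the corollary lies in the pairs $(a,b)$ with $d_p(a,b)=k$, that is, the pairs realizing the minimum gap.

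The crux is to show that for such a minimum pair the two brackets cannot both equal $1$. If they did, then $a<i<b$ and $p_i$ would sit strictly between $p_a$ and $p_b$, so $p_i$ would lie strictly inside the bounding box of $p_a$ and $p_b$. Such an entry is exactly what Lemma \ref{span} forbids for a minimum-gap pair: the proof of that lemma shows that an index simultaneously between $p_a$ and $p_b$ both horizontally and vertically would satisfy $d_p(a,i) < d_p(a,b) = k$, contradicting $mg(p)=k$. Hence at most one bracket is nonzero, giving $d_{p'} \geq k-1$ on minimum pairs as well. Since every remaining pair now has $d_{p'} \geq k-1$, we conclude $mg(del(p,i)) \geq k-1$.

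I expect the only genuine obstacle to be bookkeeping rather than insight: one must carefully verify the displayed coordinate-shift identity and confirm that a simultaneous double decrease corresponds precisely to an inner-box entry, which minimality excludes. Once it is established that the metric can drop by at most one on minimum pairs (and harmlessly by at most two on all others), the corollary follows at once, which explains why it is billed as an immediate consequence of Lemma \ref{span}.
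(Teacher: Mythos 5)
Your proof is correct and is essentially the paper's argument spelled out in full: the paper simply asserts that the corollary ``follows immediately'' from Lemma \ref{span}, and your write-up supplies exactly the missing details, namely the coordinate-shift identity showing a deletion lowers any pairwise distance by at most two, and the interior-point exclusion (the key observation in the proof of Lemma \ref{span}) ruling out a double decrease on minimum-distance pairs. No gaps; the case split between minimum pairs and pairs at distance at least $k+1$ handles everything.
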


  We are now able to prove our generalization of Corollary \ref{distinct}.

  \begin{thm} \label{otherthm} 
  Let $p \in S_n$. Then $p$ has all distinct $(n-k)$-patterns if and only if
  $mg(p) \geq k+2$.  
  \begin{proof}

  We start with the forward direction. Let $p=p_1p_2 \ldots p_n$ be a
  permutation with the maximum number of $(n-k)$-patterns. Assume, by way of
  contradiction that $mg(p) = m < k+2$. Let $i < j$ be such that $d_p(i,j) = m$.
  By Lemma \ref{span}, we have that $span_p(i,j) = \{a_1, a_2, \ldots a_{m-2}
  \}$. Now set $q = del(p;\{ a_1,a_2, \ldots a_{m-2}\})\in S_{n-m+2}$. It
  follows $mg(q)=2$, and so $q$ has a consecutive pair, which implies that $q$
  does not the maximum number $(n-1)$-patterns. Since $m-2 < k$, $p$ cannot have
  the maximum number of $(n-k)$-patterns, a contradiction.

  For the reverse implication, let $p=p_1p_2 \ldots p_n$ be a permutation with
  $mg(p) \geq k+2$. We use induction on $k$. We have already seen that the
  statement is true for $k=1$, so assume the statement holds for all positive
  integer less than $k$. Let $p=p_1p_2 \ldots p_n$ be a permutation with $mg(p)
  = k+2$. By induction, we know that this permutation has all distinct
  $(n-m)$-patterns for all $1\leq m <k$. 
    
  Suppose, by way of contradiction, that $q \in S_{n-k}$ is contained in $p$ in
  two different ways. That is, suppose that $del(p;\{a_1,a_2, \ldots a_k\}) = q
  = del(p;\{b_1,b_2, \ldots b_k\})$, with $a_i < a_j$ and $b_i<b_j$ when $i<j$,
  and $A = \{a_1, a_2, \ldots a_k \} \not = \{b_1, b_2, \ldots b_k\} = B$. 
    
  Now we claim that $A \cap B = \emptyset$. To see this, we can suppose that
  $a_i = b_j$. But then $q$ is contained as a pattern in two different ways in
  $del(p,a_i)$. However this contradicts $mg(del(p,a_i)) \geq k+1$, because by
  induction it has the maximum number of $((n-1)-(k-1)) = (n-k)$-patterns. 
    
  Assume, without loss of generality, that
  $a_1 < b_1$. Let $j \in [n]$ be the smallest value such that $j>a_1$ but $j
  \notin A$. Since $del(p;\{a_1,a_2, \ldots a_k\}) = del(p;\{b_1,b_2, \ldots
  b_k\}) = q = q_1q_2 \ldots q_{n-k}$, it follows that $p_{a_1}$ and $p_{j}$
  will both move to fulfill the role of $q_{a_1}$ once the entries from $A$ or
  $B$ are removed and the permutation is renumbered. However, since $|A| = k$,
  this implies that $d(a_1, j) < k+2$, our final contradiction. 
    
  \end{proof} \end{thm}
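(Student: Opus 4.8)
The plan is to establish both implications from the base case $k=1$ — which Corollary \ref{distinct} together with the observation that $(p_i,p_j)$ is a bond exactly when $d_p(i,j)=2$ rephrases as ``$p$ has all distinct $(n-1)$-patterns iff $mg(p)\ge 3$'' — and to bootstrap to general $k$ using the ratchet inequality of Corollary \ref{ratchet}, via an induction on $k$.

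First I would dispatch the forward direction by contraposition: assuming $mg(p)=m\le k+1$, produce two distinct $k$-element index sets that delete to the same pattern. Choosing a minimal pair $i<j$ with $d_p(i,j)=m$, Lemma \ref{span} supplies a span $S$ with $|S|=m-2$, and deleting $S$ collapses $(p_i,p_j)$ into a bond of $q=del(p;S)$. Deleting either of the two bond positions then yields one and the same permutation $r$, so $S\cup\{x\}$ and $S\cup\{y\}$ (of size $m-1\le k$) are two deletion sets giving a repeated $(n-(m-1))$-pattern. To reach size exactly $k$ I would delete a further $k-(m-1)$ entries of $r$, chosen to avoid the single surviving bond entry, and lift them back through the two order-preserving relabelings; because that surviving entry is retained, the two lifted $k$-subsets still differ in precisely the bond coordinate.

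For the reverse direction I would argue by contradiction inside the induction: suppose $mg(p)\ge k+2$ yet $del(p;A)=del(p;B)$ for distinct $k$-sets $A,B$. I would first force $A\cap B=\emptyset$, since a shared index $a$ would produce two coinciding $(n-k)$-patterns of $del(p,a)$, whose minimum gap is at least $k+1$ by Corollary \ref{ratchet}, contradicting the inductive hypothesis. Given disjointness and (WLOG) $\min A=a_1<\min B$, I would take the least $j>a_1$ with $j\notin A$; comparing the two deletions on the initial segment shows that $p_{a_1}$ and $p_j$ must both occupy slot $a_1$ of the common pattern, and since $\{a_1,\dots,j-1\}\subseteq A$ forces $j-a_1\le k$ while the matching of values pins down the vertical displacement, I would conclude $d_p(a_1,j)<k+2$, contradicting the hypothesis.

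The main obstacle in both directions is the relabeling bookkeeping: ``same reduced pattern, then delete further entries'' must be translated into genuinely distinct index sets of $p$, and a careless choice of the extra deletions (forward) or a miscount of how many deleted indices can separate $a_1$ from $j$ (reverse) would silently break the argument. I expect the forward step to be routine once the surviving bond entry is fixed, and the reverse step to hinge on the sharp bound $d_p(a_1,j)\le k+1$; I would verify the latter by checking that all indices strictly between $a_1$ and $j$, together with the value levels separating $p_{a_1}$ from $p_j$, are absorbed by the $k$ deleted positions.
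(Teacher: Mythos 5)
Your proposal follows the paper's own argument in both directions: the forward direction deletes the span of a minimal-distance pair (via Lemma \ref{span}) to create a bond and then extends the two resulting duplicate deletion sets up to size $k$, and the reverse direction is the same induction on $k$, using Corollary \ref{ratchet} to force $A \cap B = \emptyset$ and then the minimal index $j \notin A$ to exhibit a pair at distance less than $k+2$. If anything, you are slightly more careful than the paper at two points: the paper leaves implicit the lifting of the two $(m-1)$-element deletion sets to distinct $k$-element sets (your ``avoid the surviving bond entry'' bookkeeping), and it asserts $d_p(a_1,j) < k+2$ ``since $|A| = k$'' with even less justification than your closing sketch of how the horizontal and vertical separators are absorbed by the $k$ deleted positions.
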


  \begin{cor} \label{easy} 
  Let $p\in S_n$. If $|D_k(p)|= \binom{n}{k}$, then $D_j(p) = \binom{n}{j}$ for
  all $j \in [k]$.  
  \end{cor}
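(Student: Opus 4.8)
The plan is to deduce this corollary directly from Theorem \ref{otherthm}, which characterizes the property of having all distinct $(n-k)$-patterns purely in terms of the minimum gap. First I would observe that the maximum possible number of $(n-k)$-patterns of an $n$-permutation is $\binom{n}{k}$: every such pattern is of the form $del(p;S)$ for a $k$-element subset $S \subseteq [n]$ of deleted positions, and there are exactly $\binom{n}{k}$ such subsets. Hence the hypothesis $|D_k(p)| = \binom{n}{k}$ is simply the statement that $p$ has all distinct $(n-k)$-patterns, with no two choices of deleted positions collapsing to the same pattern.

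Next, applying the forward direction of Theorem \ref{otherthm} converts this hypothesis into the single numerical condition $mg(p) \geq k+2$. The crucial observation is that this condition is monotone in the parameter: for any $j$ with $1 \leq j \leq k$ we have $mg(p) \geq k+2 \geq j+2$. Thus the minimum gap being large enough to guarantee distinctness at level $k$ automatically guarantees it at every lower level as well.

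Finally, I would invoke the reverse direction of Theorem \ref{otherthm}, this time with $j$ in place of $k$: since $mg(p) \geq j+2$, the permutation $p$ has all distinct $(n-j)$-patterns, so that $|D_j(p)| = \binom{n}{j}$. Because $j \in [k]$ was arbitrary, this establishes the claim for all $j$ at once.

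There is essentially no substantive obstacle in this argument; the corollary is immediate once one recognizes that Theorem \ref{otherthm} reduces the distinctness of patterns at \emph{every} level to the single monotone quantity $mg(p)$. The only points requiring a moment's care are the verification that $\binom{n}{k}$ is genuinely the correct maximum (so that equality really does encode full distinctness) and the trivial inequality chain $mg(p) \geq k+2 \geq j+2$, which carries the monotonicity; both are routine.
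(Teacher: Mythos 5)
Your proof is correct and matches the paper's intended argument: the paper states this corollary without proof as an immediate consequence of Theorem \ref{otherthm}, and your argument (identify $|D_k(p)|=\binom{n}{k}$ with full distinctness, pass to $mg(p)\geq k+2$ via the forward direction, then use $mg(p)\geq k+2\geq j+2$ and the reverse direction at level $j$) is exactly the reasoning being invoked. No issues.
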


  To see that permutations with arbitrarily large gap sizes exist, we first note
  that the slanted cube construction presented in \cite{albert} creates a
  permutation of length $n^2$ minimum gap equal to $n+1$. We will construct a
  sequence of permutations $\{\pi^{(n)}\}_{n=2}^\infty$ which does slightly better,
  creating the same minimum gap size with shorter length. 

  To build the permutation $\pi^{(n)}$ with gap size $n$, we begin with a tiling of
  the plane with squares with side length $n$. Then we simply rotate the tiling by
  $45$ degrees and use the centers of the squares as our permutation entries. We
  define this formally as follows.

  \begin{defn} 
  Let $a_i$ be defined as 
  $$a_i = min\{d \in [k-1] : i \leq d(k-1) \} \text{ \  and  \ } b_i = (i-1 \s
  \text{ mod } (k-1)) \cdot (k-1).$$ 
  Define $p_i = a_i+b_i$. Now take the
  permutation $p'=p_1 p_2 \ldots \pi^{(k-1)^2}$, and define $\pi^{(n)} =
  del(p',1,(k-1)^2)$, the permutation obtained by deleting the first and last
  entries of $p'$.
  \end{defn}

  The permutation $\pi^{(k)}$ for $k=4,5$ is shown below, and it is clear that these
  permutations have minimum gap size equal to $4$ and $5$ respectively. It is
  clear that $\pi^{(k)}$ is an involution for all $k$, and that the complement of
  $\pi^{(k)}$ is equal to it's own reverse. Therefore, the orbit of $\pi^{(k)}$ under
  the automorphism group of the pattern poset has order 2.

  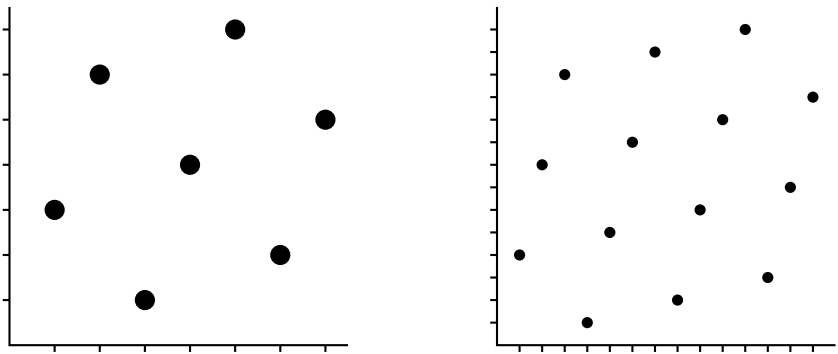
\begin{figure}[ht] \label{slant}
  \centering
  \begin{tikzpicture}
    [scale = .3, line width = .8pt]
    \draw (0,15) -- (0,0) -- (15,0);
    \foreach \num in {2, 4,..., 14}
    {
      \draw (\num, 0) -- (\num, -.3);
      \draw (0, \num) -- (-.3, \num);
    }
    \foreach \y [count = \x] in {3,6,1,4,7,2,5}
      \draw[fill = black] (2*\x,2*\y) circle (4mm);
  \end{tikzpicture} \hspace{4pc}
  \begin{tikzpicture}
    [scale = .3, line width = .8pt]
    \draw (0,15) -- (0,0) -- (15,0);
    \foreach \num in {1,..., 14}
    {
      \draw (\num, 0) -- (\num, -.3);
      \draw (0, \num) -- (-.3, \num);
    }
    \foreach \y [count = \x] in {4,8,12,1,5,9,13,2,6,10,14,3,7,11}
      \draw[fill = black] (\x,\y) circle (2mm);
  \end{tikzpicture}
   \caption{$\pi^{(4)} = 3 \ 6 \ 1 \ 4 \ 7 \ 2 \ 5$, and $\pi^{(5)} = 4 \ 8 \ 12 \ 1 \
   5 \ 9 \ 13 \ 2 \ 6 \ 10 \ 14 \ 3 \ 7 \ 11$} \label{pk}
   \end{figure}

  A permutation can be embedded in the plane and the metric $d_p$ can be extended
  to the taxicab metric $d_1$ on $\mathbb{R}^2$. It follows that any permutation
  $p$ with minimum gap size equal to $k$ defines a tiling of the plane by tilted
  squares with side lengths equal to $k$ and centers on points of $\mathbb{Z}^2$.
  It is clear that a minimal sized permutation with gap size equal to $k$ will
  produce a maximum tiling of the plane with tilted squares centered on different
  horizontal and vertical lines. There are exactly two such tilings of the plane,
  corresponding to the permutations $\pi^{(k)}$ and its reverse. We summarize this
  in the following theorem.

  \begin{thm}
  The permutation $\pi^{(k)}$ and its reverse are the shortest permutations with
  minimum gap size equal to $k$.  
  \end{thm}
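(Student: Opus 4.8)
The plan is to remove the awkwardness of the taxicab metric by rotating coordinates so that it becomes the sup-metric, in which the tilted squares of the preceding discussion are axis-aligned. To a permutation $p=p_1\cdots p_n$ I associate the points $(u_t,v_t)=(t+p_t,\,t-p_t)$ in the plane. Applying the identity $|a|+|b|=\max(|a+b|,|a-b|)$ with $a=i-j$ and $b=p_i-p_j$ gives $d_p(i,j)=\max(|u_i-u_j|,|v_i-v_j|)$, so $mg(p)\ge k$ holds exactly when the points $(u_t,v_t)$ are pairwise at sup-distance at least $k$; equivalently, the open axis-aligned squares of side $k$ centered at them have disjoint interiors. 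This is precisely the packing by tilted squares invoked in the discussion. The permutation structure is encoded by two linear conditions: since $u_t+v_t=2t$ and $u_t-v_t=2p_t$, the center-sums sweep out $\{2,4,\ldots,2n\}$ and the center-differences sweep out $\{2,4,\ldots,2n\}$, each value occurring once. Thus a shortest permutation with $mg(p)=k$ is exactly a packing of side-$k$ squares, with centers on the sublattice $u\equiv v\ (\mathrm{mod}\ 2)$, that uses as few squares as possible subject to the center-sums and center-differences each filling a full contiguous block.

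I would first dispatch attainability by checking the construction directly: from the definition of $\pi^{(k)}$ one verifies that its minimum gap is exactly $k$ and that its length is $(k-1)^2-2$. In the rotated frame the points of $\pi^{(k)}$ form a brick-like packing in which successive columns (consecutive values of $u$, spaced by $k$) are offset by exactly $k/2$ in $v$, and one checks that the resulting center-sums and center-differences fill $\{2,\ldots,2n\}$ with no gaps. The reflection $(u,v)\mapsto((n+1)-v,\,(n+1)-u)$, which is exactly the reversal $p_i\mapsto p_{n+1-i}$ in the original coordinates, sends this packing to the brick packing offset in the opposite direction, producing a second solution of the same length that is genuinely distinct from $\pi^{(k)}$.

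The heart of the argument is the matching lower bound together with rigidity. The engine is that separation forces local spread: for $0<j\le k-1$ the columns $i$ and $i+j$ obey $|p_i-p_{i+j}|\ge k-j$, so the rows appearing in any window of $k-1$ consecutive columns must occupy a long interval. I would chain this spread across all $n$ columns and reconcile it with the requirement that the rows exhaust the interval $[n]$, which is what should pin the length to $(k-1)^2-2$. For rigidity, the separation condition forces consecutive columns of the packing to be offset by exactly $k/2$: a zero or partial offset repeats a center-sum or a center-difference and so violates the permutation condition, while any larger offset wastes a row or a column and pushes the length above the minimum. There are then exactly two such extremal packings, interchanged by the reflection above, namely $\pi^{(k)}$ and its reverse.

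I expect the lower bound and the rigidity to be the main obstacle. A warning is that crude density estimates point the wrong way: the side-$k$ squares can be packed into a region far smaller than the one $\pi^{(k)}$ uses, so area bounds limit how many squares fit rather than forcing many squares, and a direct area argument cannot produce the constant $(k-1)^2-2$. The genuine mechanism is the collision between the sup-separation and the contiguity of the row and column ranges, and converting the window-spread observation into the exact value of the constant---while simultaneously proving that no extremal packing can deviate from the two brick patterns---is where the informal ``maximum tiling'' picture must be replaced by an honest extremal argument that controls both the bulk spacing and the behaviour at the boundary of the configuration.
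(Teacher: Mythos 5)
Your setup is sound, and it is essentially the paper's own argument in rotated coordinates: the substitution $(u_t,v_t)=(t+p_t,\,t-p_t)$ converting the taxicab metric to the sup-metric is correct, the encoding of the permutation condition via center-sums and center-differences is correct, and the resulting packing of axis-aligned side-$k$ squares is exactly the paper's tiling by tilted squares. The problem is that the theorem consists of two quantitative claims --- no permutation shorter than $(k-1)^2-2$ has minimum gap $k$, and exactly two permutations of that length do --- and your proposal proves neither. You name them yourself ("the lower bound and the rigidity") as the main obstacle and supply only a plan for attacking them; attainability, which you do address, is the easy finite verification. So as a proof this has a genuine gap, and it is precisely the gap that matters. (For what it is worth, the paper's own justification is the informal paragraph preceding the theorem, which waves through the same point with "it is clear that a minimal sized permutation\dots will produce a maximum tiling\dots there are exactly two such tilings"; your write-up is more candid about where the difficulty lies, but no more complete.)

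Moreover, the one concrete rigidity mechanism you do propose --- that consecutive columns of an extremal packing are offset by exactly $k/2$ --- fails for odd $k$. The centers lie on the sublattice $u\equiv v \pmod{2}$, so within a column (fixed $u$) all $v$-coordinates share a parity; their pairwise differences are even, and for odd $k$ the within-column spacing is therefore forced up to $k+1$, not $k$. Concretely, $\pi^{(5)}$ in your coordinates has columns $u=5,10,15,20,25$ with $v$-sets $\{-3,3\}$, $\{-6,0,6\}$, $\{-9,-3,3,9\}$, $\{-6,0,6\}$, $\{-3,3\}$: the within-column spacing is $6=k+1$ and the inter-column offset is $3=(k+1)/2$, not $5/2$. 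So the even and odd cases have genuinely different local structure, and any honest extremal argument (which is what this theorem still needs, from you or from the paper) must carry this parity constraint through both the counting that yields the constant $(k-1)^2-2$ and the classification of the extremal configurations.
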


  \begin{cor}
  Given any $k\in \mathbb{Z}^+$, the permutation $\pi^{(k)} \in S_{(k-1)^2 -2}$ has
  the property that $M_j(p) = \binom{n}{j}$ for all $0\leq j \leq k-2$.
  Furthermore, no permutation of length less than $(k-1)^2 - 2$ has this property.
  \begin{proof}
  Immediate from the construction above, Theorem \ref{otherthm}, and Corollary
  \ref{easy}.  
  \end{proof}
  \end{cor}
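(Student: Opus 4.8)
The plan is to recast the problem as a packing question by exploiting the $45^\circ$ rotation that turns the taxicab metric into the Chebyshev metric. Writing each entry $p_i$ as the lattice point $(i,p_i)$ and passing to rotated coordinates $(u,v)=(i+p_i,\,i-p_i)$, one checks that $d_p(i,j)=\max(|u_i-u_j|,\,|v_i-v_j|)$, so that $mg(p)\ge k$ becomes equivalent to the open axis-aligned squares of side $k$ centered at the rotated points having pairwise disjoint interiors. In these terms the theorem asserts that, among all such square packings whose centers project injectively onto both the rows and the columns of a permutation (one entry per row and per column), the ones supported on the fewest rows and columns are exactly the lattice packing underlying $\pi^{(k)}$ and its mirror image. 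I would split the argument into three parts: (1) $mg(\pi^{(k)})=k$; (2) no permutation of length $<(k-1)^2-2$ has minimum gap $k$; and (3) $\pi^{(k)}$ and its reverse are the only permutations attaining length $(k-1)^2-2$.

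For part (1) I would work directly from the defining formula. Parametrizing the entries of the untrimmed permutation $p'$ by $(c,r)$ with $0\le c,r\le k-2$, the corresponding point is $(c(k-1)+r+1,\ r(k-1)+c+1)$, so two points with parameter differences $(\Delta c,\Delta r)$ lie at taxicab distance $|\Delta c(k-1)+\Delta r|+|\Delta r(k-1)+\Delta c|$. A short case split on the signs of $\Delta c$ and $\Delta r$ shows this is at least $k$, with equality exactly when one difference is $0$ and the other is $\pm1$, so $mg(p')=k$. Since $\pi^{(k)}$ is obtained from $p'$ by deleting the two extreme corner entries—the global minimum in the first position and the global maximum in the last—and relabelling, the surviving entries are merely translated by $(-1,-1)$ in the grid; taxicab distances among them are unchanged, so $mg(\pi^{(k)})=k$ as well. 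Finally, reversal sends $(j,p_j)\mapsto(n+1-j,p_j)$, an isometry for $d_p$, so the reverse of $\pi^{(k)}$ has the same minimum gap.

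For parts (2) and (3) I would argue geometrically, as the closing remarks of the section suggest. Replacing each center by its tilted square of side $k$ (in rotated coordinates), the packing condition together with the permutation condition forces the centers onto distinct horizontal and distinct vertical lines while the squares remain interior-disjoint. The densest arrangement of equal tilted squares with centers on distinct rows and columns is the lattice generated by $(1,k-1)$ and $(k-1,1)$; restricted to a block that uses consecutive rows and columns without repetition, this lattice fills a $(k-1)^2\times(k-1)^2$ window and contains exactly $(k-1)^2$ entries, namely $p'$. Shrinking the occupied window below this forces either a repeated row or column, or two centers at taxicab distance $<k$; the only available slack is the pair of corner squares, whose removal saves exactly two rows and two columns and yields length $(k-1)^2-2$. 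The two chiralities of the lattice (the reflection swapping the two generators) give precisely $\pi^{(k)}$ and its reverse, establishing both the lower bound and uniqueness.

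The main obstacle is exactly this extremal/rigidity step: making precise the assertion that the permutation constraint singles out the lattice $\langle(1,k-1),(k-1,1)\rangle$ as the unique densest compatible packing and that only its two corners can be trimmed. The paper states this as ``there are exactly two such tilings of the plane,'' i.e.\ it is read off from the picture. To turn it into a proof I would sort the centers by column and use the forced inequality $|p_i-p_{i+1}|\ge k-1$ between consecutive columns; combined with the fact that all $m$ values are used, this should propagate rigidly so that any gap-$k$ permutation shorter than $\pi^{(k)}$ must either skip a value or create an interior-disjointness violation. A parity and boundary analysis in the rotated coordinates would then pin down the two extremal chiralities. I expect the final counting to be routine once the rigidity is in place, so the essential difficulty lies in the rigidity argument itself.
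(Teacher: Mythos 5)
Your route is essentially the paper's own: translate the pattern-count condition into a minimum-gap condition, verify that the construction has minimum gap $k$, and invoke minimality of the construction among permutations of gap at least $k$. Two remarks on how you execute it. First, the step from ``$M_j(p)=\binom{n}{j}$ for all $j\le k-2$'' to ``$mg(p)\ge k$'' is exactly Theorem~\ref{otherthm} (together with Corollary~\ref{easy}); you use it silently in your opening recasting, and it should be cited explicitly, since it is the only place where patterns enter the argument at all. Second, your part (1) is a genuine improvement over the paper, which checks $mg(\pi^{(k)})=k$ only by drawing $\pi^{(4)}$ and $\pi^{(5)}$ and declaring it clear; your parametrization and sign analysis proves it for all $k\ge 4$. (Minor slips: for $k=4$ the diagonal pairs $(\Delta c,\Delta r)=\pm(1,-1)$ also achieve distance $2k-4=k$, so your equality characterization is incomplete; and the same computation shows the opposite-sign pairs give $2k-4<k$ when $k=3$, so the construction---and in fact the corollary as stated---breaks down at $k=3$, a boundary case the paper overlooks.)

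The gap is where you say it is: parts (2) and (3) are a plan, not a proof. But note where this sits relative to the paper. The paper's proof of this corollary is a string of citations; the minimality claim (no permutation of length less than $(k-1)^2-2$ has minimum gap at least $k$) is outsourced to the theorem immediately preceding the corollary, whose own justification is nothing more than the assertion that ``there are exactly two such tilings of the plane.'' So if you permit yourself to cite that theorem, as the paper does, your parts (2) and (3) collapse to a citation and your argument closes---and part (3), uniqueness of the extremal permutations, is not even needed for the corollary, only the length lower bound is. If instead you insist on self-containment, then the rigidity step is a genuine missing piece, and it is not routine: a pure packing or area bound on interior-disjoint tilted squares of side $k$ gives only $n\gtrsim k^2/4$, so the sharp constant $(k-1)^2$ must come from the one-point-per-row-and-column constraint, via something like the propagation argument you gesture at with $|p_{i+1}-p_i|\ge k-1$. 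Neither you nor the paper carries that out; you are at least explicit that it is the crux.
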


  We end this section with one last theorem, a generalization of Theorem \ref{bigthm}.

  \begin{thm} \label{final} 
  Let $p$ be an $n$-permutation with $mg(p) = k+1$, and let $w_k$ be the
  number of pairs $(i,j) \in [n] \times [n]$ such that $|span_p(i,j)| = k-1$. Then
  the number of $(n-k)$-patterns in $p$ is $\binom{n}{k} - w_k$.  \begin{proof}
  Let $p \in S_n$ with $mg(p) = k+1$, and let $i,j \in [n]$ be such that
  $d_p(i,j) = k+1$ (that is, $|span_p(i,j)| = k-1$). Then if we let $S =
  span_p(i,j) \cup i$ and $S' = span_p(i,j) \cup j$, we see that $del(p;S) =
  del(p;S')$, and so $|D_k(p)| \leq \binom{n}{k} - w_k$. 

  For equality, we use a modification of the argument used in Theorem
  \ref{otherthm}. Suppose that $del(p;A) = del(p;B)$ for some $A = \{a_1, a_2,
  \ldots a_k\} \not = B = \{b_1, b_2, \ldots b_k\}$, with $a_i < a_j$ and $b_i <
  b_j$ for $i<j$. Suppose that $a_1 \not = b_1$, and let $s\in[n]$ be the smallest
  integer so that $s \notin A$. Then as in the proof of Theorem \ref{otherthm}, we
  must have that $d_p(a_1,s) = k+1$ and $A-a_1 = B-b_1 = span_p(a_1,s)$. 

  In the case where $a_1 = b_1$, let $p' = del(p,a_1)$, and $A'=A-a_1$, $B' =
  B-b_1$. By Corollary \ref{ratchet}, $mg(p') =k$, since if $mg(p') =
  k+1$, $del(p',A') = del(p',B')$ would contradict Theorem \ref{otherthm}. We now
  repeat the argument, and find that either $a_2 = b_2$ or $A'-a_2 = B'-b_2$. We
  repeat as necessary (no more than $k$ times) to conclude that $|A\cap B| = k-1$. 

  Finally, let $i,j$ be such that $a_i \notin B$ and $b_j \notin A$. It follows
  that $A-a_i = B-b_j = span_p(i,j)$, and so $d_p(i,j) = k+1$. Thus, for each pair
  of entries with distance $k+1$, there are exactly two sets $A,B$ for which
  $del(p;A) = del(p;B)$, and so $|D_k(p)| = \binom{n}{k} - w_k$.  
  \end{proof}
  \end{thm}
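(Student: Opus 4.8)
The plan is to view $|D_k(p)|$ as the number of distinct permutations among the $\binom{n}{k}$ images $del(p;S)$ as $S$ ranges over the $k$-element subsets of $[n]$, so that $|D_k(p)| = \binom{n}{k}$ minus the number of \emph{coincidences} $del(p;A) = del(p;B)$ with $A \neq B$. The goal is then to establish a bijection between these coincidences and the $w_k$ pairs $\{i,j\}$ achieving $d_p(i,j) = mg(p) = k+1$, equivalently (by Lemma \ref{span}) with $|span_p(i,j)| = k-1$.

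For the inequality $|D_k(p)| \le \binom{n}{k} - w_k$ I would argue as the statement already suggests: given a minimum-gap pair $\{i,j\}$, deleting the $k-1$ entries of $span_p(i,j)$ leaves $p_i$ and $p_j$ adjacent in both position and value, i.e.\ a bond in $del(p;span_p(i,j))$. By Lemma \ref{biglem}, deleting either endpoint of a bond yields the same pattern, so $del(p;span_p(i,j)\cup\{i\}) = del(p;span_p(i,j)\cup\{j\})$. This exhibits at least $w_k$ distinct coincidences.

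The substance is the reverse inequality: every coincidence must arise this way. Suppose $del(p;A) = del(p;B)$ with $A = \{a_1 < \cdots < a_k\} \neq B = \{b_1 < \cdots < b_k\}$. I would strip off shared initial indices: while $a_1 = b_1$, pass to $p' = del(p,a_1)$, where Corollary \ref{ratchet} guarantees $mg(p') \ge k$ and Theorem \ref{otherthm} forces $mg(p') = k$ (else the reduced coincidence would be impossible), then recurse on $A \setminus \{a_1\}$ and $B \setminus \{b_1\}$. This reduces to the case of distinct smallest indices, where I would reuse the migration argument from the proof of Theorem \ref{otherthm}: the entry forced to fill the role of the first discrepancy shows the smallest index $s \notin A$ satisfies $d_p(a_1,s) = k+1$ and $A \setminus \{a_1\} = B \setminus \{b_1\} = span_p(a_1,s)$. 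Unwinding the recursion yields $|A \cap B| = k-1$, with the two unmatched indices forming a minimum-gap pair whose span is $A \cap B$.

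The main obstacle is the bookkeeping that converts this structural description into an exact count. I must check that the map $\{i,j\} \mapsto \{span_p(i,j)\cup\{i\},\, span_p(i,j)\cup\{j\}\}$ is a genuine bijection onto the coincidences: injectivity is easy, since the unordered pair of subsets has symmetric difference $\{i,j\}$, and surjectivity is exactly the structural claim above. The delicate point is ruling out coincidence classes of size greater than two. If three subsets shared a pattern, any two would differ in a single index by the structural claim, forcing a triangle; but then the differing indices of the third pair would contain one of the other unmatched indices in their span, giving a span strictly larger than $k-1$ and contradicting minimality of the gap. Hence the coincidence relation is a matching with exactly $w_k$ edges, and collapsing each edge gives $|D_k(p)| = \binom{n}{k} - w_k$. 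Confirming that the recursion terminates in at most $k$ steps and that the gap degrades by exactly one at each deletion, so that Corollary \ref{ratchet} and Theorem \ref{otherthm} apply at every stage, is where the care lies.
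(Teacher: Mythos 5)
Your proposal follows essentially the same route as the paper: the same span-plus-endpoint construction for the inequality $|D_k(p)| \le \binom{n}{k} - w_k$, and the same stripping/migration argument (via Corollary \ref{ratchet} and Theorem \ref{otherthm}) showing that every coincidence $del(p;A)=del(p;B)$ has $|A \cap B| = k-1$ and arises from a minimum-gap pair. The only place you go beyond the paper is in explicitly verifying that coincidence classes have size exactly two, so that the coincidence relation is a matching with $w_k$ edges --- a counting subtlety the paper's proof passes over silently --- and your sketched triangle argument for this point is sound.
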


\section{Further Questions}

  Considering Wilf's pattern packing problem, we would hope that maximizing the
  large patterns would also maximize the total patterns. For example, having the
  maximum number of patterns of large sizes seems to maximize the total number of
  patterns, but there is some subtlety involved. For example, the two permutations
  in Figure \ref{slant} have been verified to have the maximum number of patterns
  for their size, but not every permutation with the maximum number of patterns
  has the maximum minimum gap. 

  For a concrete example, we see that $|\mathcal{D}(3614725)| =
  |\mathcal{D}(5274136)| = 55$, the maximum number of patterns for permutations
  of size $7$. However, we see that $mg(3614725) = 4$ while $mg(5274136)
  = 3$. Relaxing the requirement that permutations have the maximum number of
  patterns for their size allows us to take this a step further. Setting $p =
  31462758$ and $q = 36147825$, we find that $|\mathcal{D}(p)| = 75$ while
  $|\mathcal{D}(q)| = 89$, though $mg(p) = 3$ and $mg(q)=2$. 

  The data suggests that while maximizing the number of fixed
  size patterns requires a large minimum gap size, the total number of patterns is
  more dependent on the average value of the gaps between pairs of entries. 
  The slanted square construction of \cite{albert} yields a
  permutation with maximum the average gap size between entries, while the
  construction presented here maximizes the minimum gap size.

  Another question which arises is whether or not we can construct a well-behaved
  multivariate generating function which grants us insight into the distribution
  for the number of distinct $(n-k)$-patterns of random permutations as we did
  with the $k=1$ case. However, even if we had the distribution of the minimum gap
  size of random permutations, there is no guarantee that this would translate to
  exact formulas for the distribution of the number of patterns of each length.

\nocite{smith}
\bibliographystyle{plain}
\bibliography{fixedpatterns}

\end{document}